\tikzset{highlight/.style={rectangle,
fill=gray!50,
rounded corners = 0.5 mm, 
inner sep=1pt,
fit=#1}}
\def\Hom{{\rm Hom}}
\def\ind{{\rm ind}}
\def\ad{\mathop{\rm ad}}
\def\im{\mathop{\rm im}}
\def\phi{\varphi}
\def\g{\mathfrak g}
\def\h{\mathfrak h}
\def\F{\mathbb F}
\let\@@pmod\pmod
\DeclareRobustCommand{\pmod}{\@ifstar\@pmods\@@pmod}
\def\@pmods#1{\mkern4mu({\operator@font mod}\mkern 6mu#1)}
\def\foo#1\endgraf\unskip#2\foo{\def\row@to@buffer{#1\endgraf\unskip\unskip#2}}
\newtheorem{theorem}{Theorem}[section]
\newtheorem{lemma}[theorem]{Lemma}
\newtheorem{definition}[theorem]{Definition}
\newtheorem{rem}[theorem]{Remark}
\theoremstyle{remark}
\providecommand{\keywords}[1]{\noindent{Keywords:} #1}
\providecommand{\classify}[1]{\noindent{Mathematics Subject
    Classification:} #1}
\title{On the Cohomology of Restricted Heisenberg Lie Algebras}
\author{Tyler J. Evans \\
  Department of Mathematics \\
  California State Polytechnic University - Humboldt\\
  Arcata, CA 95521 USA\\
  evans@humboldt.edu \and
  Alice Fialowski \\
  Faculty of Informatics\\
  E\" otv\" os Lor\' and University \\
  Budapest, Hungary\\
  fialowski@inf.elte.hu,
  alice.fialowski@gmail.com \and
Yong Yang\\
College of Mathematics and System Science\\
Xinjiang University\\
Urumqi 830046, China\\
yangyong195888221@163.com}
\date{}
\begin{document}
\maketitle

\begin{abstract}
  We show that the Heisenberg Lie
  algebras over a field $\F$ of
  characteristic $p>0$ admit a family of restricted Lie
  algebras, and we classify all such non-isomorphic restricted Lie algebra structures.
  We use the ordinary 1- and 2-cohomology spaces with
  trivial coefficients to compute the restricted 1- and 2-cohomology
  spaces of these restricted Heisenberg Lie algebras.  We describe the
  restricted 1-dimensional central extensions, including explicit
  formulas for the Lie brackets and $\cdot^{[p]}$-operators. 
\end{abstract}

{\footnotesize
\keywords{restricted Lie algebra;
  Heisenberg algebra; (restricted) cohomology, central extension}

\classify{17B50; 17B56}}

\section{Introduction}
Heisenberg Lie algebras have attracted special attention in modern
mathematics and physics because of their applications in the
commutation relations in quantum mechanics. In what follows, we adopt
the following definition:
\begin{definition}\rm
  For a non-negative integer $m$, the \emph{Heisenberg Lie algebra}
  $\mathfrak{h}_{m}$ is the $(2m+1)$-dimensional vector space over a
  field $\F$ spanned by the elements
  \[\{e_1,\ldots , e_{2m},e_{2m+1}\}\] with the non-vanishing Lie
  brackets
  \[[e_i,e_{m+i}]=e_{2m+1},\] for $1\le i\le m$.
\end{definition}

In one of the first explicit computations of the cohomology of a
family of nilpotent Lie algebras, L.J. Santharoubane computed the
cohomology with trivial coefficients of Heisenberg Lie algebras over
fields of characteristic zero \cite{S} (1983). The author shows that
for $n \le m$,
\begin{equation*}
  \dim H^n(\mathfrak{h}_m)={2m \choose n} - {2m \choose {n-2}}
\end{equation*}
in addition to explicitly describing bases for the spaces of cocycles
and coboundaries.  In \cite{SK} (2005), E. Sk\"oldberg used algebraic
Morse theory to compute the Poincar\'e polynomial of the Lie algebra
$\mathfrak{h}_m$ over fields of characteristic $p=2$. G. Cairns and
S. Jambor in \cite{CJ} (2008) gave the $n$th Betti number of
$\mathfrak{h}_m$ over a field of any prime characteristic for
$n \le m$,
\begin{equation*}
  \dim H^n(\mathfrak{h}_m)={{2m } \choose n} - {{2m} \choose {n-2}} +
  \sum_{i=1}^{\lfloor {\frac{n+1}{2p}} \rfloor} {{2m+1} \choose
    {n-2ip+1}} -
  \sum_{i=1}^{\lfloor  {\frac{n-1}{2p}} \rfloor} {{2m+1} \choose {n-2ip-1}}
\end{equation*}

There are also other results related to invariants of Heisenberg
algebras, like gradings and symmetries in \cite{T}, and also the
dimensions of the adjoint cohomology spaces were given in \cite{M},
\cite{CaTi}.

Much less is known about modular Heisenberg algebras over a field of
prime characteristic.

The notion of restricted Lie algebras over fields of positive
characteristic was introduced by Jacobson in 1937 \cite{J}. Since
then, the study of restricted Lie algebras has proved to be fruitful
for several reasons. First, the finite dimensional Lie algebras which
arise ``in nature'' are restricted, e.g. the derivation algebra
$\text{Der}(A)$ of any algebra $A$, the Lie algebra of an algebraic
group, the primitive elements of an irreducible co-commutative Hopf
algebra, etc. Second, certain technical tools are available in a
restricted Lie algebra which are not available in an arbitrary algebra
$\g$. For example, an element $g \in \g$ has a
Jordan-Chevalley-Seligman decomposition into its semisimple and
nilpotent parts, see Seligman \cite{Se}.

Since $\mathfrak{h}_m$ is 2-step nilpotent, $(\ad g)^p=0$ for all
$g \in \mathfrak{h}_m$ so we can use Jacobson's theorem \cite{J} and
define a restricted Lie algebra $[p]$-map on $\mathfrak{h}_m$ by
choosing $e_i^{[p]}\in \mathfrak{h}_m$ so that
$\ad e_i^{[p]}=(\ad e_i)^p=0$ for all $1\le i\le 2m+1$. That is, by
choosing $e_i^{[p]}$ in the center $\F e_{2m+1}$. For each $i$, we
choose a scalar $\lambda_i\in \F$ and set
$e_i^{[p]}=\lambda_i e_{2m+1}$. We let
$\lambda=(\lambda_1,\dots, \lambda_{2m+1})$, and we denote the
corresponding restricted Lie algebra $\mathfrak{h}_m^\lambda(p)$.

Our goal in this note is to classify the family of non-isomorphic
restricted structures on $\mathfrak{h}_m^\lambda(p)$ parameterized by
the elements $\lambda \in \F^{2m+1}$, compute the ordinary and
restricted cohomology spaces (with trivial coefficients)
$H^q(\mathfrak{h}_m^{\lambda}(p))$ and
$H_*^q(\mathfrak{h}_m^{\lambda}(p))$ for $q=1,2$, and give explicit
bases for these spaces. We also give the bracket structures and
$[p]$-operators for the corresponding restricted one-dimensional
central extensions of the restricted Lie algebras.

\section{Restricted Lie algebras and cohomology}

In this section, we recall the definition of a restricted Lie algebra,
the Chevalley-Eilenberg cochain complex for Lie algebra cohomology,
and the (partial) cochain complex in \cite{EFu} that we will use to
compute the restricted 1- and 2- cohomology spaces. Everywhere in this
section, $\F$ denotes a field of characteristic
$p>0$ and $\g$ denotes a finite dimensional Lie algebra over $\F$ with
an ordered basis $\{e_1,\dots , e_n\}$.

\subsection{Restricted Lie Algebras}
A \emph{restricted Lie algebra} (also called \emph{p-algebra}) $\mathfrak{g}$ over $\F$ is a Lie
algebra $\mathfrak{g}$ over $\F$ together with a mapping
$\cdot^{[p]}:\g\to\g$, written $g\mapsto g^{[p]}$, such that for all
$a\in\F$, and all $g,h\in \mathfrak{g}$
\begin{itemize}
\item[(1)] $(a g)^{[p]}=a^{p}g^{[p]}$;
\item[(2)]
  $(g+h)^{[p]}=g^{[p]}+h^{[p]}+ \sum\limits_{i=1}^{p-1} s_i(g,h)$,
  where $is_i(g,h)$ is the coefficient of $t^{i-1}$ in the formal expression
  $(\mathrm{ad}(tg+h))^{p-1}(g)$; and
\item[(3)] $(\mathrm{ad}\ g)^{p}=\mathrm{ad}\ g^{[p]}$.
\end{itemize}

The mapping $\cdot^{[p]}$ is called a \emph{$[p]$-operator} on
$\mathfrak{g}$, and a Lie algebra $\g$ is \emph{restrictable} if it is possible to define a
$[p]$-operator on $\g$. We refer the reader to \cite{J} (Chapter V,
Section 7) and \cite{SF} (Section 2.2) for an introduction to
restricted Lie algebras.  Jacobson shows that a finite
dimensional Lie algebra $\g$ is
restrictable if and only if it admits a basis $\{e_1,\dots, e_n\}$
such that $(\ad e_i)^p$ is an inner derivation for
all $1\le i\le n$ \cite{J}. Choosing $e_i^{[p]}\in\g$ with
$\ad e_i^{[p]}=(\ad e_i)^p$ for all $i$ completely determines a
$[p]$-operator.

\subsection{Chevalley-Eilenberg Lie algebra cohomology}

Our primary interest is in classifying (restricted) one-dimensional
central extensions, so we describe only the
Chevalley-Eilenberg cochain spaces $C^q(\g)=C^q(\g,\F)$
for $q=0,1,2,3$ and differentials $d^q:C^q(\g)\to C^{q+1}(\g)$
for $q=0,1,2$ (for details on the Chevalley-Eilenberg cochain
complex we refer the reader to \cite{ChE} or \cite{F}). Set
$C^0 (\g)=\F$ and $C^q (\g)= (\wedge^q\g)^*$ for $q=1,2,3$. We will use the
following bases, ordered lexicographically, throughout the paper.
\begin{align*}
  C^0 (\g):& \{1\}&\\
  C^1 (\g):&  \{e^k\ |\ 1\le k\le n\}&\\
  C^2 (\g):& \{e^{i,j}\ |\ 1\le i<j\le n\}&\\
  C^3 (\g):& \{e^{u,v,w}\ |\ 1\le u<v<w\le n\}&\\
\end{align*}
Here $e^k$, $e^{i,j}$ and $e^{u,v,w}$ denote the dual vectors of the
basis vectors $e_k\in \g$,
$e_{i,j}=e_i\wedge e_j\in \wedge^2\g $ and
$e_{u,v,w}=e_u\wedge e_v\wedge e_w\in \wedge^3\g$, respectively.
The differentials $d^q:C^q (\g)\to C^{q+1}(\g)$ are defined for $\psi\in C^1 (\g)$,
$\phi\in C^2 (\g)$ and $g,h,f\in\g$ by\small
\begin{align*}
  d^0: C^0 (\g)\to C^1 (\g),  &\  d^0=0&\\
  d^1:C^1 (\g)\to C^2 (\g), &\   d^1(\psi)(g\wedge h)=\psi([g,h])&\\
  d^2:C^2 (\g)\to C^3 (\g), &\  d^2(\phi)(g\wedge h\wedge f)=\phi([g,h]\wedge f)-\phi([g,f]\wedge h)+\phi([h,f]\wedge g). &\\
\end{align*}\normalsize
The maps $d^q$ satisfy $d^{q}d^{q-1}=0$ and 
$H^q(\g)=H^q(\g;\F)=\ker(d^q)/\im(d^{q-1})$.

\subsection{Restricted Lie algebra cohomology}

In this subsection, we recall the definitions and results on the
(partial) restricted cochain complex given in \cite{EFu} only for the case
of trivial coefficients. For a more general treatment of the
cohomology of restricted Lie algebras, we refer the reader to
\cite{Fe}.

Given $\phi\in C^2(\g)$, a map $\omega:\g\to\F$ is {\bf
  $\phi$-compatible} if for all $g,h\in\g$ and all $a\in\F$
\\
  
$\omega(a g)=a^p \omega (g)$ and
\begin{equation}
  \label{starprop}
  \omega(g+h)=\omega(g)+\omega(h) + \sum_{\substack{g_i=\mbox{\rm\scriptsize $g$
        or $h$}\\ g_1=g, g_2=h}}
  \frac{1}{\#(g)}\phi([g_1,g_2,g_3,\dots,g_{p-1}]\wedge g_p)
\end{equation}
where $\#(g)$ is the number of factors $g_i$ equal to $g$.

We define
\[\Hom_{\rm Fr}(\g,\F) = \{f:\g\to\F\ |\ f(a x+b
  y)=a^pf(x)+b^pf(y)\}\] for all $a,b\in\F$ and all $x,y\in \g$ to be
the space of {\it Frobenius homomorphisms} from $\g$ to $\F$. A map
$\omega:\g\to\F$ is $0$-compatible if and only if
$\omega\in \Hom_{\rm Fr}(\g,\F)$.

Given $\phi\in C^2(\g)$, we can assign the values of $\omega$
arbitrarily on a basis for $\mathfrak{g}$ and use (1) to define a
$\phi$-compatible map $\omega: \mathfrak{g} \to \F $. We can define
$\omega$ this way because the sum (1) is symmetric in $g$ and $h$
(permuting the $g_i$ does not change the number of $g_i$ equal to
$g=g_1$), both $\phi$ and the Lie bracket are bilinear, and the
exterior algebra is associative ensuring that
$\omega(f+(g+h))=\omega((f+g)+h)$, so that $\omega$ is well defined.
The map $\omega$ is unique because its values are completely
determined by $\omega(e_i)$ and $\phi(e_i,e_j)$ (c.f. \cite{EFi2}). In
particular, given $\phi\in C^2(\g)$, we can define $\tilde\phi(e_i)=0$
for all $i$ and use (\ref{starprop}) to determine the unique
$\phi$-compatible map $\widetilde\phi:\g\to\F$. Note that, in general,
$\widetilde\phi\ne 0$ but $\widetilde\phi (0)=0$. Moreover, If
$\phi_1,\phi_2\in C^2(\g)$ and $a\in\F$, then
$\widetilde{(a\phi_1+\phi_2)} = a\widetilde\phi_1 + \widetilde\phi_2$.
\\

If $\zeta\in C^3(\g)$, then a map $\eta:\g\times \g\to\F$ is {\bf
  $\zeta$-compatible} if for all $a\in\F$ and all $g,h,h_1,h_2\in\g$,
$\eta(\cdot,h)$ is linear in the first coordinate,
$\eta(g,a h)=a^p\eta(g,h)$ and
\begin{align*}
  \eta(g,h_1+h_2) &=
                    \eta(g,h_1)+\eta(g,h_2)-\nonumber \\
                  & \sum_{\substack{l_1,\dots,l_p=1 {\rm or} 2\\ l_1=1,
  l_2=2}}\frac{1}{\#\{l_i=1\}}\zeta (g\wedge
  [h_{l_1},\cdots,h_{l_{p-1}}]\wedge h_{l_{p}}).
\end{align*}
The restricted cochain spaces are defined as $C^0_*(\g)=C^0 (\g)$,
$C^1_*(\g)=C^1 (\g)$,
\[C^2_*(\g)=\{(\phi,\omega)\ |\ \phi\in C^2 (\g), \omega:\g\to\F\
  \mbox{\rm is $\phi$-compatible}\}\]
\[C^3_*(\g)=\{(\zeta,\eta)\ |\ \zeta\in C^3 (\g),
  \eta:\g\times\g\to\F\ \mbox{\rm is $\zeta$-compatible}\}.\] 

For $1\le i\le n$, define $\overline e^i:\g\to\F$ by
$\overline e^i \left(\sum_{j=1}^n a_j e_j\right ) = a_i^p.$ The set
$\{\overline e^i\ |\ 1\le i\le n\}$ is a basis for the space of
Frobenius homomorphisms $\Hom_{\rm Fr}(\g,\F)$. Since
\[\dim C^2_*(\g) = \binom{\dim\g+1}{2}=\binom{\dim\g}{2}+\dim\g,\]
we have that
\begin{equation*}
  \{(e^{i,j},\widetilde{e^{i,j}})\ |\ 1\le i<j\le n\} \cup \{(0,\overline e^i)\ |\ 1\le i\le n\}
\end{equation*}
is a basis for $C^2_*(\g)$. We use this basis in all computations that
follow.

Define $d_*^0=d^0$. For $\psi\in C^1_*(\g)$, define the map
$\ind^1(\psi):\g\to\F$ by $\ind^1(\psi)(g)=\psi(g^{[p]})$. The mapping
$\ind^1(\psi)$ is $d^1(\psi)$-compatible for all $\psi\in C^1_*(\g)$,
and the differential $d^1_*:C^1_*(\g)\to C^2_*(\g)$ is defined by
\begin{equation}
  d^1_*(\psi) = (d^1(\psi),\ind^1(\psi)).
\end{equation}
For $(\phi,\omega)\in C^2_*(\g)$, define the map
$\ind^2(\phi,\omega):\g \times\g \to\F$ by the formula
\[\ind^2(\phi,\omega)(g,h)=\phi(g\wedge
  h^{[p]})-\varphi([g,\underbrace{h,\ldots,h}_{p-1}],h).\] The mapping
$\ind^2(\phi,\omega)$ is $d^2(\phi)$-compatible for all
$\phi\in C^2(\g)$, and the differential $d^2_*:C^2_*(\g)\to C^3_*(\g)$
is defined by
\begin{equation}
  d^2_*(\phi,\omega) =
  (d^2(\phi),\ind^2(\phi,\omega)). 
\end{equation}
These maps $d_*^q$ satisfy $d_*^{q}d_*^{q-1}=0$ and we define
\[H_*^q(\g)=H_*^q(\g;\F)=\ker(d_*^q)/\im(d_*^{q-1}). \]

Note that (with trivial coefficients) if $\omega_1$ and $\omega_2$ are
both $\phi$-compatible, then
$\ind^2(\phi, \omega_1)=\ind^2(\phi, \omega_2)$.

\begin{lemma}
  \label{swap}
  If $(\phi,\omega)\in C^2_*(\g)$ and $\phi=d^1(\psi)$ with
  $\psi\in C^1 (\g)$, then $(\phi,\ind^1(\psi))\in C^2_*(\g)$ and
  $\ind^2(\phi,\omega)=\ind^2(\phi,\ind^1(\psi))$.
\end{lemma}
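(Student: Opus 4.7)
The plan is to observe that both conclusions are essentially immediate from the definitions already recorded just above the lemma. For the first assertion, $(\phi,\ind^1(\psi))\in C^2_*(\g)$, I would simply invoke the fact stated after the definition of $d^1_*$, namely that $\ind^1(\psi)$ is $d^1(\psi)$-compatible for every $\psi\in C^1_*(\g)$. Since $\phi=d^1(\psi)$ by hypothesis, this says precisely that $\ind^1(\psi)$ is $\phi$-compatible, so the pair $(\phi,\ind^1(\psi))$ lies in $C^2_*(\g)$. Equivalently, $(\phi,\ind^1(\psi))=d^1_*(\psi)$, and the codomain of $d^1_*$ is $C^2_*(\g)$ by construction.

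For the second assertion, I would point to the explicit formula
\[
\ind^2(\phi,\omega)(g,h)=\phi(g\wedge h^{[p]})-\phi([g,\underbrace{h,\ldots,h}_{p-1}]\wedge h),
\]
in which the argument $\omega$ does not actually appear on the right-hand side. Hence $\ind^2(\phi,\omega)$ depends only on $\phi$, and applying this to the two $\phi$-compatible maps $\omega$ and $\ind^1(\psi)$ yields $\ind^2(\phi,\omega)=\ind^2(\phi,\ind^1(\psi))$ at once. This is precisely the parenthetical remark, recorded just before the lemma, that with trivial coefficients any two $\phi$-compatible maps produce the same $\ind^2$.

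Thus the only thing that requires checking is that $\ind^1(\psi)$ is indeed $\phi$-compatible when $\phi=d^1(\psi)$, and this is exactly the well-definedness statement for $d^1_*$ already quoted from \cite{EFu}. There is therefore no genuine obstacle in the proof; the content of the lemma is simply that, whenever $\phi$ is a $1$-coboundary, one may replace the second coordinate $\omega$ by the canonical choice $\ind^1(\psi)$ without changing $d^2_*(\phi,\omega)=(d^2(\phi),\ind^2(\phi,\omega))$. This small bookkeeping observation will be used later to align representatives of cohomology classes in $H^2_*(\g)$ with the image of $d^1_*$.
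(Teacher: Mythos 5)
Your proposal is correct and follows essentially the same route as the paper: both conclusions are read off from the fact that $\ind^1(\psi)$ is $d^1(\psi)$-compatible and from the remark preceding the lemma that, with trivial coefficients, $\ind^2(\phi,\omega)$ is independent of the choice of $\phi$-compatible $\omega$. Your additional observation that this independence is visible directly from the formula for $\ind^2$ (in which $\omega$ does not occur) is a harmless elaboration of the same argument.
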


\begin{proof}
  Since $\ind^1(\psi)$ is $d^1(\psi)$-compatible for all
  $\psi\in C^1 (\g)=C^1_*(\g)$, it follows that if $\phi=d^1(\psi)$,
  then $(\phi,\ind^1(\psi))=(d^1(\psi),\ind^1(\psi))\in C^2_*(\g)$ and
  $\ind^2$ depends only on $\phi$ by the last sentence in the previous
  paragraph.
\end{proof}

If $\g$ is a restricted Lie algebra and $M$ is a restricted
$\g$-module ($g^{[p]}x=g^px$ for all $g\in\g$ and $x\in M$), there is
a six-term exact sequence due to Hochschild \cite[p. 575]{H} that
relates the ordinary and restricted 1- and 2-cohomology spaces:
\begin{diagram}[LaTeXeqno]
  \label{sixterm}
  0 &\rTo &H^1_*(\g,M)&\rTo &H^1(\g,M)&\rTo&\Hom_{\rm Fr}(\g,M^\g) & \rTo \\
  & \rTo & H^2_*(\g,M)&\rTo &H^2(\g,M)&\rTo^\Delta&\Hom_{\rm
    Fr}(\g,H^1(\g,M)). &
\end{diagram}
In the case of trivial coefficients, the map
\[\Delta: H^2(\g)\to\Hom_{\rm Fr}(\g,H^1(\g))\] in
(\ref{sixterm}) is given explicitly by
\begin{equation}\label{delta}\Delta_\phi(g)(h) = \phi(h\wedge
  g^{[p]})-\varphi([h,\underbrace{g,\ldots,g}_{p-1}]\wedge g)
\end{equation}
where $g,h\in\g$ \cite{H,Viv}. It follows that if
$(\phi,\omega)\in C^2_*(\g)$, then
\begin{equation}\label{induce2}\Delta_\phi(g)(h) =
  \ind^2(\phi,\omega)(h,g)
\end{equation}
for all $g,h\in\g$.

\section{Restricted structures on the modular Lie algebra
  $\mathfrak{h}_m(p)$}
In this section, we classify all restricted structures on the Heisenberg Lie
algebra over a field of prime characteristic.

Since $\mathfrak{h}_m$ is 2-step nilpotent, $(\ad g)^p=0$ for all
$g \in \mathfrak{h}_m$ so we can use Jacobson's theorem \cite{J} and
define a restricted Lie algebra $[p]$-map on $\mathfrak{h}_m$ by
choosing $e_i^{[p]}\in \mathfrak{h}_m$ so that
$\ad e_i^{[p]}=(\ad e_i)^p=0$ for all $1\le i\le 2m+1$. That is, by
choosing $e_i^{[p]}$ in the center $\F e_{2m+1}$. For each $i$, we
choose a scalar $\lambda_i\in \F$ and set
$e_i^{[p]}=\lambda_i e_{2m+1}$. Let
$\lambda=(\lambda_1,\dots, \lambda_{2m+1})$, and denote the
corresponding restricted Lie algebra by
$\mathfrak{h}_m^\lambda(p)$. Alternatively, we can say that the restricted
structure is determined by a linear form $\lambda:\h_m\to\F$ defined
by $\lambda(e_i)=\lambda_i$. Below we write $\h_m^\lambda$ in place of
$\h_m^\lambda(p)$ when no confusion can arise. 

\subsection{The case $p > 2$}

In this subsection, we assume $p > 2$. Since $(p-1)$-fold brackets are
all zero, for all $a\in\F$ and all
$g,h\in \mathfrak{h}_{m}^{\lambda}$, we have
$$(a g)^{[p]}=a^{p}g^{[p]},\quad (g+h)^{[p]}=g^{[p]}+h^{[p]},$$
and therefore the $[p]$-operator on $\mathfrak{h}_m^{\lambda}$ is
$p$-semilinear (see also \cite{J}, Chapter 2, Lemma 1.2). From this we
get that if $g=\sum^{2m+1}_{i=1} a_i e_i\in \mathfrak{h}_m^{\lambda}$,
then
\begin{equation}\label{res}
  g^{[p]}=\left(\sum^{2m+1}_{i=1}a^{p}_i\lambda_i\right) e_{2m+1}.
\end{equation}

\begin{theorem}\label{pgreater2}
  Suppose that there are two $p$-operators $\cdot^{[p]}$ and $\cdot^{[p]'}$
  on $\mathfrak{h}_{m}$ defined by two linear forms
  $\lambda,\lambda':\h_m\to\F$, respectively.  Then
  $\mathfrak{h}_{m}^{\lambda}$ and $\mathfrak{h}_{m}^{\lambda'}$ are
  isomorphic if and only if there exist an invertible matrix
  $A=(a_{ij})\in\F^{2m\times2m}$ and
  $k=(k_1,\dots, k_{2m})\in\F^{2m}$, such that
  \begin{itemize}
  \item[(1)] $A\left(
      \begin{array}{cc}
        0 & I_m \\
        -I_m & 0 \\
      \end{array}
    \right)A^{t} =\mu\left(
      \begin{array}{cc}
        0 & I_m \\
        -I_m & 0 \\
      \end{array}
    \right)$;

  \item[(2)] $\mu\left(
      \begin{array}{c}
        \lambda_1 \\
        \vdots \\
        \lambda_{2m} \\
      \end{array}
    \right) =A^{[p]}\left(
      \begin{array}{c}
        \lambda'_1 \\
        \vdots \\
        \lambda'_{2m} \\
      \end{array}
    \right) +\lambda_{2m+1}'\left(
      \begin{array}{c}
        k^{p}_1 \\
        \vdots \\
        k^{p}_{2m}  \\
      \end{array}
    \right)$;

  \item[(3)] $\lambda_{2m+1}=\mu^{p-1}\lambda_{2m+1}'$;
  \end{itemize}
  where $A^{[p]}=(a_{ij}^{p})$ and $(\det A)^2=\mu^{2m}$.
\end{theorem}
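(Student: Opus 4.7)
The plan is to translate the abstract isomorphism condition into the matrix equations (1)--(3). Let $\varphi:\h_m^\lambda\to\h_m^{\lambda'}$ be an isomorphism of restricted Lie algebras. Since the center of $\h_m$ equals $\F e_{2m+1}$ and is preserved by any Lie algebra isomorphism, we have $\varphi(e_{2m+1})=\mu e_{2m+1}$ for some nonzero $\mu\in\F$. For $1\le i\le 2m$, write $\varphi(e_i)=\sum_{j=1}^{2m}a_{ij}e_j + k_i e_{2m+1}$, so that $\varphi$ is encoded by the matrix $A=(a_{ij})$, the vector $k=(k_1,\ldots,k_{2m})$, and the scalar $\mu$. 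The map $\varphi$ is a vector-space isomorphism if and only if the induced map on $\h_m/Z(\h_m)$ is invertible, i.e., $A$ is invertible.

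For necessity, I would first use the Lie bracket condition $\varphi([e_i,e_j])=[\varphi(e_i),\varphi(e_j)]$. Expanding the right-hand side in the basis of $\h_m^{\lambda'}$ and using the non-vanishing brackets $[e_k,e_{m+k}]=e_{2m+1}$ shows $[\varphi(e_i),\varphi(e_j)]=(AJA^t)_{ij}\,e_{2m+1}$, where $J$ is the $2m\times 2m$ matrix appearing in (1). Since $\varphi([e_i,e_j])=\mu J_{ij}\,e_{2m+1}$, equating yields condition (1); taking determinants (and noting $\det J=1$) yields $(\det A)^2=\mu^{2m}$. Next I would apply the restricted axiom $\varphi(g^{[p]})=\varphi(g)^{[p]'}$. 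For $g=e_i$ with $i\le 2m$, the left-hand side is $\mu\lambda_i e_{2m+1}$, while the right-hand side, computed via the $p$-semilinear formula (\ref{res}) in $\h_m^{\lambda'}$, equals $\bigl(\sum_j a_{ij}^p\lambda_j' + k_i^p\lambda_{2m+1}'\bigr)e_{2m+1}$; collecting these identities for $i=1,\ldots,2m$ gives condition (2). Applying the same axiom to $e_{2m+1}$ yields $\mu\lambda_{2m+1}=\mu^p\lambda_{2m+1}'$, which is condition (3).

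For sufficiency, given an invertible $A\in\F^{2m\times 2m}$, $k\in\F^{2m}$, and $\mu\in\F^\times$ satisfying (1)--(3), I would define $\varphi$ on the basis by the formulas above and check it is a restricted isomorphism. The bracket condition follows from (1) by reversing the previous calculation, and the $[p]$-condition reduces, via the $p$-semilinearity (\ref{res}), to checking $\varphi(e_i^{[p]})=\varphi(e_i)^{[p]'}$ on each basis vector, which is precisely what conditions (2) and (3) encode.

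The main obstacle is bookkeeping rather than genuine difficulty: because $p>2$, all correction terms $s_i(g,h)$ vanish on the $2$-step nilpotent algebra $\h_m$, so the $[p]$-operator is completely $p$-semilinear by (\ref{res}). Thus the only subtlety is carefully encoding the interaction between the bracket condition (which is $\F$-bilinear in $A$) and the $[p]$-condition (which involves the Frobenius twist $A\mapsto A^{[p]}=(a_{ij}^p)$) while correctly tracking the contribution of the central part $k$ of $\varphi$ to the $[p]$-computation.
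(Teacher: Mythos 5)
Your proposal is correct and follows essentially the same route as the paper: encode $\Psi$ by $(A,k,\mu)$ using that the center $\F e_{2m+1}$ is preserved, derive (1) and $(\det A)^2=\mu^{2m}$ from the bracket condition, derive (2) and (3) from $p$-semilinearity of the $[p]$-operators applied to the basis vectors, and reverse the computation for sufficiency. The only cosmetic difference is that you check the bracket condition on basis elements while the paper expands it for arbitrary $g,h$ in coordinates; you also make explicit the (correct) justification that $\Psi(e_{2m+1})$ must be a scalar multiple of $e_{2m+1}$, which the paper leaves implicit.
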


\begin{proof}
  If $\Psi: \mathfrak{h}_m^{\lambda} \to \mathfrak{h}_m^{\lambda'}$ is
  a restricted Lie algebra isomorphism, then we can write
  \[\left(
      \begin{array}{c}
        \Psi(e_1) \\
        \vdots \\
        \Psi(e_{2m}) \\
      \end{array}
    \right) =A\left(
      \begin{array}{c}
        e_1 \\
        \vdots \\
        e_{2m} \\
      \end{array}
    \right) +\left(
      \begin{array}{c}
        k_1 \\
        \vdots \\
        k_{2m} \\
      \end{array}
    \right)e_{2m+1}\] and $\Psi(e_{2m+1})=k_{2m+1}e_{2m+1}$ where
  $A$ is an invertible $2m\times 2m$ matrix,
  $k=(k_1,\ldots,k_{2m})\in\F^{2m}$ and $0\ne k_{2m+1}\in\F$.  If
  $g,h \in \mathfrak{h}_{m}^{\lambda}$ and we write
  $g=\sum^{2m+1}_{i=1}a_ie_i$, $h=\sum^{2m+1}_{i=1}b_ie_i$,
  then \[\Psi(g)=(a_1,\ldots,a_{2m})A\left(
      \begin{array}{c}
        e_1 \\
        \vdots \\
        e_{2m} \\
      \end{array}
    \right) +\left(\sum^{2m+1}_{i=1}a_ik_i\right)e_{2m+1}\]
  and
  \[\Psi(h)=(b_1,\ldots,b_{2m})A\left(
      \begin{array}{c}
        e_1 \\
        \vdots \\
        e_{2m} \\
      \end{array}
    \right) +\left(\sum^{2m+1}_{i=1}b_ik_i\right)e_{2m+1}.\] Moreover,
  we have
  \[[\Psi(g),\Psi(h)]=(a_1,\ldots,a_{2m}) A\left(
      \begin{array}{cc}
        0 & I_m \\
        -I_m & 0 \\
      \end{array}
    \right)A^{t} \left(
      \begin{array}{c}
        b_1 \\
        \vdots \\
        b_{2m} \\
      \end{array}
    \right) e_{2m+1}\] and
  \[\Psi([g,h])=(a_1,\ldots,a_{2m}) \left(
      \begin{array}{cc}
        0 & I_m \\
        -I_m & 0 \\
      \end{array}
    \right) \left(
      \begin{array}{c}
        b_1 \\
        \vdots \\
        b_{2m} \\
      \end{array}
    \right)k_{2m+1} e_{2m+1}.\] From $\Psi([g,h])=[\Psi(g),\Psi(h)]$,
  it follows that 
  \[A\left(
    \begin{array}{cc}
      0 & I_m \\
      -I_m & 0 \\
    \end{array}
  \right)A^{t} =k_{2m+1}\left(
    \begin{array}{cc}
      0 & I_m \\
      -I_m & 0 \\
    \end{array}
  \right).\]
We set $\mu=k_{2m+1}$ so that
  \[A\left(
    \begin{array}{cc}
      0 & I_m \\
      -I_m & 0 \\
    \end{array}
  \right)A^{t} =\mu \left(
    \begin{array}{cc}
      0 & I_m \\
      -I_m & 0 \\
    \end{array}
  \right),\]
and taking determinants gives $(\det
  A)^2=\mu^{2m}$.
  On the
  other hand, $\Psi$ preserves the restricted $[p]$-structure so that
  \begin{equation*}
    \Psi(e_{2m+1}^{[p]})=\Psi(e_{2m+1})^{[p]'}\quad \mbox{\rm
      or equivalently}\quad 
    \lambda_{2m+1} \mu e_{2m+1}=\mu^{p}\lambda_{2m+1}'e_{2m+1},
  \end{equation*}
  and
  \begin{align*}
    \left(
    \begin{array}{c}
      \Psi(e_1^{[p]}) \\
      \vdots \\
      \Psi(e_{2m}^{[p]}) \\
    \end{array}
    \right)  & = \left(
               \begin{array}{c}
                 \Psi(e_1)^{[p]'} \\
                 \vdots \\
                 \Psi(e_{2m})^{[p]'} \\
               \end{array}
    \right) \quad \mbox{\rm or equivalently}\\
    \mu\left(
    \begin{array}{c}
      \lambda_1 \\
      \vdots \\
      \lambda_{2m} \\
    \end{array}
    \right) e_{2m+1}&= A^{[p]} \left(
                      \begin{array}{c}
                        \lambda'_1 \\
                        \vdots \\
                        \lambda'_{2m} \\
                      \end{array}
    \right) e_{2m+1}+ \left(
    \begin{array}{c}
      k^{p}_1 \\
      \vdots \\
      k^{p}_{2m} \\
    \end{array}
    \right) \lambda_{2m+1}'e_{2m+1} .
  \end{align*}
  This shows $\lambda_{2m+1}=\mu^{p-1}\lambda_{2m+1}'$ and
  \[\mu\left(
      \begin{array}{c}
        \lambda_1 \\
        \vdots \\
        \lambda_{2m} \\
      \end{array}
    \right) =A^{[p]}\left(
      \begin{array}{c}
        \lambda'_1 \\
        \vdots \\
        \lambda'_{2m} \\
      \end{array}
    \right) +\lambda_{2m+1}'\left(
      \begin{array}{c}
        k^{p}_1 \\
        \vdots \\
        k^{p}_{2m}  \\
      \end{array}
    \right).\]

  Conversely, if there exists an invertible $2m\times 2m$ matrix
  $A$ with $(\det A)^{2}=\mu^{2m}$ and
  $k=(k_1,\ldots,k_{2m})\in\F^{2m}$ which satisfy conditions (1), (2)
  and (3) for some $\mu\in \F$, the argument above is reversible, and we
  obtain an isomorphism between the restricted Lie algebras
  $\mathfrak{h}_m^{\lambda}$ and $\mathfrak{h}_m^{\lambda'}$.
\end{proof}

\begin{rem}\rm If $p>2$ and $m=1$, there are exactly
  $3$ non-isomorphic 3-dimensional restricted Heisenberg algebras
  given by the linear forms $\lambda=0, \lambda=e_1^*$ and
  $\lambda=e_{3}^*$ \cite{EM}.
\end{rem}

\subsection{The case $p = 2$}

If $p=2$, then the $[2]$-operator
$\cdot^{[2]}: \h_m^\lambda(2)\to\h_m^\lambda(2)$ is not
$p$-semilinear, but instead satisfies
\begin{equation}
  \label{pmap2}
  (g+h)^{[2]}=g^{[2]}+h^{[2]}+ [g,h]
\end{equation}
for all $g,h\in\h_m^\lambda(2)$. If
$g=\sum^{2m+1}_{i=1} a_i e_i \in \mathfrak{h}_m^{\lambda}(2)$, then
(\ref{pmap2}) gives
 
 \begin{equation}\label{res2}
   g^{[2]}=\left(\sum^{2m+1}_{i=1}a^{2}_i\lambda_i+\sum^{m}_{j=1} a_j
     a_{m+j} \right)e_{2m+1}.
 \end{equation}
 In order to define an isomorphism, we introduce some notations.  For
 $a=(a_1,\dots,a_{2m})\in\F^{2m}$ we write
 $a_{\uppercase\expandafter{\romannumeral1}}=(a_1,\dots,a_m)$ and
 $a_{\uppercase\expandafter{\romannumeral2}}=(a_{m+1},\dots,a_{2m})$.
 We denote by $E_{ij}$ the $2m\times 2m$ matrix with a single nonzero
 entry equal to $1$ in the $(i,j)$-position.

 \begin{theorem}\label{pis2}
   Suppose that there are two $2$-operators $\cdot^{[2]}$ and
   $\cdot^{[2]'}$ on $\mathfrak{h}_{m}(2)$ defined by linear forms
   $\lambda,\lambda':\h_m(2)\to\F$, respectively.  Then
   $\mathfrak{h}_{m}^{\lambda}(2)$ and
   $\mathfrak{h}_{m}^{\lambda'}(2)$ are isomorphic if and only if
   there exists an invertible $2m\times 2m$ matrix $A=(a_{ij})$ and
   $k=(k_1,\ldots,k_{2m})\in\F^{2m}$ such that conditions (1) and (3)
   in Theorem \ref{pgreater2} hold where $\mu^{2m}=(\det A)^{2}$, and the
   equation
   \begin{itemize}
   \item[(2')]
     \begin{align*}
       \mu a\left(\sum_{i=1}^{2m} \lambda_i
       E_{ii}\right)&a^{T}+\mu
                      a_{\uppercase\expandafter{\romannumeral1}}a^{T}_{\uppercase\expandafter{\romannumeral2}}=\\ 
                    &aA\left(\sum_{i=1}^{2m}
                      \lambda'_i
                      E_{ii}\right)(aA)^{T}+(aA)_{\uppercase\expandafter{\romannumeral1}}(aA)^{T}
                      _{\uppercase\expandafter{\romannumeral2}}+\lambda'_{2m+1}\left(ak^{T}\right)^2
     \end{align*}
   \end{itemize}
   holds for any $a\in \F^{2m}$ .
   
 \end{theorem}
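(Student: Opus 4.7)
The plan is to follow the same strategy as in the proof of Theorem \ref{pgreater2}, adapting it to the new formula (\ref{res2}) for the $[2]$-operator, which is no longer $p$-semilinear. Starting with a restricted Lie algebra isomorphism $\Psi:\mathfrak{h}_m^{\lambda}(2)\to\mathfrak{h}_m^{\lambda'}(2)$, I would write $\Psi(e_i)=\sum_{j=1}^{2m}a_{ij}e_j+k_ie_{2m+1}$ for $1\le i\le 2m$ and $\Psi(e_{2m+1})=\mu e_{2m+1}$, where $A=(a_{ij})$ is invertible. Because the underlying Lie bracket is the same as in the case $p>2$, the bracket-preservation argument carries over verbatim and yields condition (1) together with $(\det A)^2=\mu^{2m}$.

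The key new ingredient is checking that $\Psi$ preserves the $[2]$-operator. Since $\cdot^{[2]}$ is not $p$-semilinear, it is no longer enough to check the equality on basis elements; instead I would enforce $\Psi(g^{[2]})=\Psi(g)^{[2]'}$ for an arbitrary element $g=\sum_{i=1}^{2m+1}a_ie_i$. Setting $b=aA\in\F^{2m}$ and $c=ak^T+a_{2m+1}\mu$, a direct computation gives $\Psi(g)=\sum_{j=1}^{2m}b_je_j+ce_{2m+1}$, and applying (\ref{res2}) on both sides, together with the characteristic $2$ identity $c^2=(ak^T)^2+a_{2m+1}^2\mu^2$, produces a single scalar equation (read off from the coefficient of $e_{2m+1}$) that must hold for every $a\in\F^{2m+1}$.

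To separate this identity into conditions (2') and (3), I would split it into the part depending on $a_{2m+1}$ and the part independent of it. The coefficient of $a_{2m+1}^2$ gives $\mu\lambda_{2m+1}=\mu^2\lambda'_{2m+1}$, which is condition (3) after dividing by $\mu$ (noting that $\mu^{p-1}=\mu$ when $p=2$). The remaining part, after rewriting $\sum_i a_i^2\lambda_i$ as $a\bigl(\sum_i\lambda_iE_{ii}\bigr)a^T$ and $\sum_{j=1}^m a_ja_{m+j}$ as $a_{\uppercase\expandafter{\romannumeral1}}a_{\uppercase\expandafter{\romannumeral2}}^T$, with the analogous substitutions $b=aA$ on the primed side, is exactly condition (2'); the extra term $\lambda'_{2m+1}(ak^T)^2$ originates from $c^2\lambda'_{2m+1}$.

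The converse is obtained by reversing each step: given $A$, $k$, and $\mu$ satisfying (1), (2'), and (3), defining $\Psi$ by the above formulas yields a restricted Lie algebra isomorphism, and the computations match term-by-term. The main obstacle I anticipate is the careful bookkeeping in characteristic $2$: one must track how the bilinear cross term $\sum_{j=1}^m a_ja_{m+j}$ transforms under the substitution $b=aA$, and verify that the $\lambda'_{2m+1}(ak^T)^2$ contribution sits cleanly in condition (2') while the quadratic-in-$a_{2m+1}$ contribution lands in condition (3), with no hidden mixed terms spoiling the separation.
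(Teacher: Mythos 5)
Your proposal is correct and follows essentially the same route as the paper: conditions (1) and the determinant relation come from bracket preservation exactly as in Theorem \ref{pgreater2}, and the $[2]$-operator is checked on an arbitrary element via (\ref{res2}), with the characteristic-$2$ splitting $c^2=(ak^T)^2+a_{2m+1}^2\mu^2$ separating the identity into (3) and (2'). The only cosmetic difference is that the paper extracts condition (3) directly from $\Psi(e_{2m+1}^{[2]})=\Psi(e_{2m+1})^{[2]'}$ and then uses it to cancel the $a_{2m+1}^2$ terms, whereas you read (3) off as the coefficient of $a_{2m+1}^2$ in the general identity; these are equivalent.
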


\begin{proof}
  If
  $\Psi: \mathfrak{h}_m^{\lambda}(2) \rightarrow
  \mathfrak{h}_m^{\lambda'}(2)$ is a restricted Lie algebra
  isomorphism, then using the same notation, the proof of
  Theorem~\ref{pgreater2} still shows that $\Psi$ satisfies conditions (1)
  and (3). We write
  $g=\sum^{2m+1}_{i=1} a_i e_i \in \mathfrak{h}_m^{\lambda}(2)$, and compute
  \begin{align}\label{2.1}
    \begin{split}
    \Psi(g^{[2]})&=\mu\left(\sum_{i=1}^{2m} a^{2}_i
      \lambda_i+\sum_{j=1}^{m}a_ja_{m+j}\right)e_{2m+1} +
      a_{2m+1}^{2}\lambda_{2m+1}e_{2m+1}\\
    &=\mu\left( a\left(\sum_{i=1}^{2m} \lambda_i
       E_{ii}\right)a^{T}+
     a_{\uppercase\expandafter{\romannumeral1}}a^{T}_{\uppercase\expandafter{\romannumeral2}}\right)e_{2m+1}
   + a_{2m+1}^2\lambda_{2m+1}e_{2m+1}.
   \end{split}
  \end{align}
  On the other hand, if we let $aA=(b_1, b_2, ..., b_{2m})$, then\small
  
  \begin{align}\label{2.2}
    \begin{split}
    \Psi(g)^{[2]'}&=\left(\sum^{2m+1}_{i=1} a_i \Psi(e_i))\right)^{[2]'} \\
                  &=\left(\sum^{2m}_{i=1} a_i \Psi(e_i)\right)^{[2]'}+a_{2m+1}^2 \Psi(e_{2m+1}) ^{[2]'} +0\\
                  &=\left(\sum^{2m}_{j=1} b_j e_j+ \left(\sum^{2m}_{i=1}a_i
                    k_i \right)e_{2m+1}\right)^{[2]'}+a_{2m+1}^2
                  \mu^2\lambda_{2m+1}'e_{2m+1} \\ 
                  &=\left(\sum^{2m}_{j=1} b_j e_j\right)^{[2]'}+
                  \left(\sum^{2m}_{i=1}a_i k_i\right)^2
                  \lambda_{2m+1}'e_{2m+1}+a_{2m+1}^2 \mu^2\lambda_{2m+1}'e_{2m+1} \\ 
                  &=\left(\sum_{j=1}^{2m}
                    b^{2}_j\lambda'_j+\sum_{j=1}^{m} b_j
                    b_{m+j}+\lambda'_{2m+1}\left(\sum_{i=1}^{2m}a_i
                      k_i\right)^2\right) e_{2m+1}\\
                  &+a_{2m+1}^2\mu^2\lambda_{2m+1}' e_{2m+1}\\
                  &=\left(aA\left(\sum_{i=1}^{2m}
                      \lambda'_i
                      E_{ii}\right)(aA)^{T}+(aA)_{\uppercase\expandafter{\romannumeral1}}(aA)^{T}_{\uppercase\expandafter{\romannumeral2}}+\lambda'_{2m+1}\left(ak^{T}\right)^2\right)e_{2m+1}\\
                  &+a_{2m+1}^2\mu^2\lambda_{2m+1}' e_{2m+1}
                \end{split}
  \end{align}
  \normalsize Comparing equation (\ref{2.1}) with (\ref{2.2}), and
  using the equality (3) in Theorem~\ref{pgreater2} we have (2').

 Conversely, if there exists an invertible $2m\times 2m$ matrix
  $A$ with $(\det A)^{2}=\mu^{2m}$ and
  $k=(k_1,\ldots,k_{2m})\in\F^{2m}$ which satisfy conditions (1), (2')
  and (3) for some $\mu\in\F$ and all $a\in\F^{2m}$, the argument above is reversible and we
  obtain an isomorphism between the restricted Lie algebras
  $\mathfrak{h}_m^{\lambda}(2)$ and $\mathfrak{h}_m^{\lambda'}(2)$.
\end{proof}

\begin{rem}\rm 
  For the 3-dimensional Heisenberg algebra in characteristic $p=2$,
  there are two restricted Heisenberg algebras, given by the linear
  forms $\lambda=0$ and $\lambda=e_3^*$ \cite{EM}.
\end{rem}

\section{Restricted cohomology $H^q_*(\h_m^\lambda)$ for $q=1,2$}
The cohomology of Heisenberg Lie algebras $\h_m$ with trivial
coefficients was first studied by Santharoubane in \cite{S} (in
characteristic $0$) and by Cairns and Jambor in \cite{CJ} (in
characteristic $p>0$).

\begin{lemma}\label{H}\cite{S,CJ}
  For $q\leq m$,

  (1) if $\mathrm{char}\ \F=0$,
  \[H^{q}(\h_m)=\bigwedge^{q}(\h_m/\F e_{2m+1})^{\ast}/(d
    e_{2m+1}^{\ast}\wedge\bigwedge^{q-2}(\h_m/\F e_{2m+1})^{\ast});\]

  (2) if $\mathrm{char}\ \F=p>0$,
  \begin{align*}
    H^{q}(\h_m)&=\bigwedge^{q}(\h_m/\F
                 e_{2m+1})^{\ast}/(de_{2m+1}^{\ast}\wedge\bigwedge^{q-2}(\h_m/\F
                 e_{2m+1})^{\ast}) \\
               &\oplus((d e_{2m+1}^{\ast})^{p-1}\wedge
                 e_{2m+1}^{\ast}\wedge\bigwedge^{q-2p+1}(\h_m/\F
                 e_{2m+1})^{\ast})
  \end{align*}
  where $d e_{2m+1}^{\ast}=\sum^{m}_{i=1}e^{i,m+i}$.
  
\end{lemma}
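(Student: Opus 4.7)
The plan is to compute $H^q(\h_m)$ directly from the Chevalley--Eilenberg complex $C^\bullet = \bigwedge^\bullet \h_m^\ast$, exploiting the fact that $\h_m$ is $2$-step nilpotent with $1$-dimensional centre. Set $\alpha = e^{2m+1}$, $W = \spn(e^1,\ldots,e^{2m}) = (\h_m/\F e_{2m+1})^\ast$, and $\beta = \sum_{i=1}^m e^{i,m+i} \in \bigwedge^2 W$. The first step is to evaluate $d$ on the dual basis: from $d^1(\psi)(g \wedge h) = \psi([g,h])$ and the bracket relations $[e_i,e_{m+i}] = e_{2m+1}$, one reads off $d e^k = 0$ for $1 \le k \le 2m$ and $d\alpha = \beta$. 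Since $d$ extends to $\bigwedge^\bullet \h_m^\ast$ as a graded derivation, this determines the differential everywhere.

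Next, decompose $C^q = \bigwedge^q W \oplus \alpha \wedge \bigwedge^{q-1} W$. Because every generator of $W$ is closed, $d$ vanishes on $\bigwedge^\bullet W$, and the Leibniz rule yields
$$d(\omega + \alpha \wedge \eta) = \beta \wedge \eta \in \bigwedge^{q+1} W$$
for $\omega \in \bigwedge^q W$ and $\eta \in \bigwedge^{q-1} W$. Collecting kernels and images gives
$$H^q(\h_m) \;=\; \bigwedge^q W \big/ \bigl(\beta \wedge \bigwedge^{q-2} W\bigr) \;\oplus\; \alpha \wedge \ker\!\bigl(\beta\wedge : \bigwedge^{q-1} W \to \bigwedge^{q+1} W\bigr),$$
and the first summand already matches the term common to both parts of the lemma.

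It remains to identify $K := \ker\!\bigl(\beta\wedge : \bigwedge^{q-1} W \to \bigwedge^{q+1} W\bigr)$ in the range $q \le m$. In characteristic $0$, $\beta$ is a nondegenerate symplectic form on $W$ and the hard Lefschetz theorem makes $\beta\wedge$ injective on $\bigwedge^{q-1} W$ whenever $q-1 \le m-1$; hence $K = 0$ and part~(1) follows. In characteristic $p$, expanding $\beta^p = p!\sum_{i_1<\cdots<i_p} e^{i_1,m+i_1} \wedge \cdots \wedge e^{i_p,m+i_p}$ shows $\beta^p = 0$, so $\beta^{p-1} \wedge \bigwedge^{q-2p+1} W \subseteq K$. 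The main obstacle is confirming that this is the \emph{entire} kernel. My plan is to install the Lefschetz triple $(L,\Lambda,H)$ with $L = \beta\wedge$ on $\bigwedge^\bullet W$, which still carries an $\mathfrak{sl}_2$-action in characteristic $p$, and use the weight decomposition together with $L^p = 0$ and a modular primitive-decomposition argument to pin down $K = \beta^{p-1} \wedge \bigwedge^{q-2p+1} W$ for $q \le m$; a practical fallback is to match dimensions against the Cairns--Jambor formula quoted in the introduction. Combining this with the first summand produces the extra contribution $(d e_{2m+1}^\ast)^{p-1} \wedge e_{2m+1}^\ast \wedge \bigwedge^{q-2p+1}(\h_m/\F e_{2m+1})^\ast$ that distinguishes part~(2).
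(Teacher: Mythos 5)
First, a point of reference: the paper does not prove this lemma at all --- it is quoted from \cite{S} and \cite{CJ} --- so there is no in-paper argument to compare against, only the original sources. Your reduction is the right one and is essentially theirs: writing $W=(\h_m/\F e_{2m+1})^{\ast}$, $\alpha=e^{2m+1}$, $\beta=d\alpha=\sum_i e^{i,m+i}$, splitting $C^q=\bigwedge^qW\oplus\alpha\wedge\bigwedge^{q-1}W$, computing $d(\omega+\alpha\wedge\eta)=\beta\wedge\eta$, and concluding
\[
H^{q}(\h_m)\cong\bigwedge^{q}W\big/\bigl(\beta\wedge\textstyle\bigwedge^{q-2}W\bigr)\;\oplus\;\alpha\wedge\ker\bigl(\beta\wedge\colon\textstyle\bigwedge^{q-1}W\to\textstyle\bigwedge^{q+1}W\bigr)
\]
is correct, and the characteristic-zero case is genuinely finished by hard Lefschetz.

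The gap is exactly where you flag it: the identification $\ker(\beta\wedge)=\beta^{p-1}\wedge\bigwedge^{q-2p+1}W$ on $\bigwedge^{q-1}W$ in characteristic $p$ is the entire mathematical content of part (2), and you leave it as a plan rather than a proof. Neither of your proposed routes works as stated. The ``modular primitive decomposition'' is not available off the shelf: although $L,\Lambda,H$ still satisfy the $\mathfrak{sl}_2$ relations over $\F$, the module $\bigwedge^{\bullet}W$ is no longer semisimple, the $H$-eigenvalue on $\bigwedge^{k}W$ is $k-m$ reduced mod $p$ (so the ``weight decomposition'' does not even separate degrees differing by $p$), and the usual projectors onto primitive components carry denominators divisible by $p$. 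A substitute that does work is the factorization $\bigwedge^{\bullet}W\cong\bigotimes_{i=1}^{m}\bigwedge^{\bullet}W_i$ with $L=\sum_i L_i$ and $L_i^2=0$, which reduces the kernel computation to multiplication by $t_1+\cdots+t_m$ on $\F[t_1,\dots,t_m]/(t_1^2,\dots,t_m^2)$ tensored with trivial factors --- essentially the route of \cite{CJ} --- but that argument still has to be carried out. Your fallback of matching dimensions against the Betti-number formula quoted in the introduction is circular: that formula is the theorem of \cite{CJ} and is derived from precisely the kernel identification you are trying to establish, so it cannot serve as independent input. (For the only cases the paper actually uses, $q=1,2$, your argument is already complete, since $\ker(L)$ on $\bigwedge^{0}W$ and on $\bigwedge^{1}W$ vanishes by direct inspection whenever $q\le m$.)
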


If $p>0$ and $q=1$ or $q=2$, then the second term in part (2) of
Lemma~\ref{H} vanishes so the cohomology spaces do not depend on $p$.
In particular
\[H^1(\h_m)=(\h_m/\F e_{2m+1})^{\ast}\] and
\[H^2(\h_m)=\bigwedge^{2}(\h_m/\F e_{2m+1})^{\ast}/(d
  e_{2m+1}^{\ast}).\]
 
Let us now consider the restricted Lie algebra
$\mathfrak{h}_m^{\lambda}$. By definition, $[\h_m^\lambda,\h_m^\lambda]= \F e_{2m+1}$, and the
$[p]$-operator formulas (\ref{res}) and (\ref{res2}) both imply
$\langle (\h_m^\lambda)^{[p]}\rangle_\F\subseteq
[\h_m^\lambda,\h_m^\lambda]$.  For any restricted Lie algebra $\g$,
\cite[(1.4.2)]{F} and \cite[Theorem 2.1]{H} respectively state that
\[H^1(\g,\F)=(\g/[\g,\g])^* \ \mbox{\rm and}\
  H^1_*(\g,\F)=(\g/([\g,\g]+\langle \g^{[p]}\rangle_\F))^*.\] It
follows that $H^1(\h_m^\lambda,\F)=H^1_*(\h_m^\lambda,\F)$, the
classes of $\{e^1,\dots , e^{2m}\}$ form a basis, the map
$H^1_*(\h_m^\lambda) \to H^1(\h_m^\lambda)$ is an isomorphism, and the
six-term exact sequence (\ref{sixterm}) decouples to the exact
sequence
\begin{diagram}
  0&\rTo&\Hom_{\rm Fr}(\h_m^\lambda,\F) & \rTo & H^2_*(\h_m^\lambda) &
  \rTo &H^2(\h_m^\lambda)&\rTo^\Delta&\Hom_{\rm
    Fr}(\h_m^\lambda,H^1(\h_m^\lambda)).
\end{diagram}
The map $\Hom_{\rm Fr}(\g,\F) \to H^2_*(\g) $ sends $\omega$ to the
class of $ (0,\omega)$.  The image of this map is a $(2m+1)$-dimensional
subspace of $H^2_*(\g)$ spanned by the classes $(0,\overline
e^i)$. This space corresponds to the space of restricted
one-dimensional central extensions that split as ordinary Lie algebra
extensions.

If we let $g=\sum a_i e_i$, $h=\sum b_i e_i$,
$\phi=\sum \sigma_{i,j}e^{i,j}$, and $\omega:\h_m^\lambda\to \F$ be
any $\phi$-compatible map, then (\ref{delta}) together with
(\ref{res}) or (\ref{res2}) gives
\begin{eqnarray*}
  \Delta_\phi(g)(h)  &=& \ind^2(\phi,\omega)(h,g) \\
                         &=&\phi(h\wedge
                             g^{[p]})-\varphi([h,\underbrace{g,\ldots,g}_{p-1}],g)\\
                         &=& \left\{
                             \begin{array}{ll}
                                \left(\sum_{i=1}^{2m+1} a_i^p\lambda_i\right
                               )\left(\sum_{i=1}^{2m} b_i\sigma_{i 2m+1}\right), & \hbox{$p>2$;}\\
                               \left(\sum^{2m+1}_{i=1}a^{2}_i\lambda_i+\sum^{m}_{j=1}
                               a_j a_{m+j}
                               \right)\left(\sum_{i=1}^{2m}
                               b_i\sigma_{i 2m+1}\right) 
                              \\
                               -\varphi([h,g],g), & \hbox{$p=2$.}
                              
                             \end{array}
                                                                                   \right.
\end{eqnarray*}
If $\phi$ is a cocycle, then $\sigma_{i 2m+1}=0$ for all
$1\le i\le 2m$ so $\Delta=0$, and we have an exact sequence
\begin{diagram}[LaTeXeqno]
  \label{ses}
  0&\rTo&\Hom_{\rm Fr}(\h_m^\lambda,\F) & \rTo & H^2_*(\h_m^\lambda
  )&\rTo &H^2(\h_m^\lambda )&\rTo&0.
\end{diagram}

\begin{theorem}
  \label{maintheorem}
  For $m\ge 1$ and any form $\lambda:\h_m\to\F$,
  $\dim H^2_*(\h_m^\lambda)=2m^2+m$ and a basis consists of the
  classes of the cocycles
  \[\{(e^{i,j},\widetilde{e^{i,j}}),
    (e^{m+i,m+j},\widetilde{e^{m+i,m+j}}),
    (e^{i,m+j},\widetilde{e^{i,m+j}}),
    (e^{j,m+i},\widetilde{e^{j,m+i}})\ |\ 1\le i< j\le m\}\]
  \[\bigcup \{(e^{i,m+i},\widetilde{e^{i,m+i}})\ |\ 1\le i\le m-1\}
    \bigcup \{(0,\overline e^i)\ |\ 1\le i\le 2m+1\}.\]
\end{theorem}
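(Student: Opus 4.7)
The plan is to read the theorem off the short exact sequence (\ref{ses}) by computing the dimensions of the two outer terms and exhibiting explicit cocycles that project to chosen bases. By Lemma \ref{H}(2), $\dim H^2(\h_m) = \binom{2m}{2} - 1 = 2m^2 - m - 1$, where the subtracted $1$ reflects the single coboundary relation $d^1 e^{2m+1} = \sum_{i=1}^m e^{i,m+i}$. Since $\dim \Hom_{\rm Fr}(\h_m^\lambda,\F) = 2m+1$ with basis $\{\overline e^i\}$, exactness of (\ref{ses}) yields $\dim H^2_*(\h_m^\lambda) = (2m+1) + (2m^2-m-1) = 2m^2 + m$, matching the stated count of basis elements.

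To exhibit a basis, I would first fix a basis of $H^2(\h_m)$. Because $[\h_m,\h_m] \subseteq \F e_{2m+1}$, every $e^{i,j}$ with $j \le 2m$ is already a cocycle in $C^2(\h_m)$; the only relation among their classes is the one above, so omitting $e^{m,2m}$ yields a basis. Splitting the remaining indices according to the block structure of $\{1,\ldots,m\}\cup\{m+1,\ldots,2m\}$ gives exactly the four cocycle families indexed by $1\le i<j\le m$ listed in the theorem, together with the diagonal entries $e^{i,m+i}$ for $1\le i\le m-1$. Each such $e^{i,j}$ lifts to a pair $(e^{i,j},\widetilde{e^{i,j}})\in C^2_*(\h_m^\lambda)$, and the family $(0,\overline{e^i})$ for $1\le i\le 2m+1$ spans the image of the injection $\Hom_{\rm Fr}(\h_m^\lambda,\F)\hookrightarrow H^2_*(\h_m^\lambda)$.

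It remains to check that each listed pair is a cocycle and that the collection is linearly independent in $H^2_*$. For the cocycle check, $d^2 e^{i,j}=0$ as above, and $\ind^2(e^{i,j},\widetilde{e^{i,j}})(g,h)$ vanishes because $h^{[p]}\in\F e_{2m+1}$ together with $j\le 2m$ kills the summand $e^{i,j}(g\wedge h^{[p]})$, while the $(p-1)$-fold bracket $[g,h,\ldots,h]$ is zero by 2-step nilpotency when $p>2$ and reduces to $[g,h]\in\F e_{2m+1}$ when $p=2$, killing the remaining summand as well; the pairs $(0,\overline e^i)$ are trivially cocycles. For independence, any relation of the form $\sum\alpha_{i,j}(e^{i,j},\widetilde{e^{i,j}})+\sum\beta_i(0,\overline e^i)=d^1_*(\psi)=(d^1\psi,\ind^1\psi)$ projects in the first coordinate to a coboundary relation in $C^2(\h_m)$, forcing all $\alpha_{i,j}=0$ since the chosen $e^{i,j}$ represent a basis of $H^2(\h_m)$; then $d^1\psi=0$ gives $\psi\in\spn\{e^1,\ldots,e^{2m}\}$, on which $\ind^1\psi\equiv 0$ because $g^{[p]}\in\F e_{2m+1}$, so the $\beta_i$ must vanish as well by independence of the Frobenius basis. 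The main technical care is in unifying the $p=2$ and $p>2$ arguments for $\ind^2=0$ and in tracking the single coboundary relation that dictates which $e^{i,m+i}$ is omitted from the basis of $H^2(\h_m)$.
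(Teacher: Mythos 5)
Your proposal is correct and follows essentially the same route as the paper: it reads $\dim H^2_*(\h_m^\lambda)=2m^2+m$ off the exact sequence (\ref{ses}) together with the basis of $H^2(\h_m^\lambda)$ supplied by Lemma~\ref{H}(2), and lifts that basis via $\phi\mapsto(\phi,\widetilde\phi)$ alongside the classes $(0,\overline e^i)$. The extra verifications you include (that each $(e^{i,j},\widetilde{e^{i,j}})$ has $\ind^2=0$ and that the listed classes are independent in $H^2_*$) are details the paper absorbs into the splitting map $[\phi]\mapsto[(\phi,\widetilde\phi)]$ and the direct sum decomposition, but they do not constitute a different approach.
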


\begin{proof}
  The exact sequence (\ref{ses}) gives
  \[H^2_*(\h_m^\lambda)=\Hom_{\rm Fr}(\h_m^\lambda,\F)\oplus
    H^2(\h_m^\lambda).\] We have already remarked that the image of
  the injection $\Hom_{\rm Fr}(\g,\F) \to H^2_*(\g) $ is a
  $(2m+1)$-dimensional subspace of $H^2_*(\g)$ spanned by the classes
  $(0,\overline e^i)$. The splitting map
  $H^2(\h_m^\lambda)\to H^2_*(\h_m^\lambda)$ sends the class of $\phi$
  to the class of $(\phi,\tilde\phi)$. Part (2) of Lemma~\ref{H}
  implies that the classes of
  \[\{e^{i,j}, e^{m+i,m+j}, e^{i,m+j},e^{m+i,j}\ |\ 1\le i< j \le m\}\cup
    \{e^{i,m+i}\ |\ 1\le i\le m-1\}\] form a basis for
  $H^2(\h_m^\lambda)$ so the dimension of $H^2_*(\h_m^\lambda)$ is $4\binom{m}{2}+m-1+2m+1=2m^2+m$. This completes the proof of the theorem.
\end{proof}

\section{Restricted one-dimensional central extensions of
  $\h_m^\lambda$}

One-dimensional central extensions $\mathfrak{G}=\g\oplus\F c$ of an ordinary Lie
algebra $\g$ are parameterized by the cohomology group $H^2(\g)$
[\cite{F}, Chapter 1, Section 4.6], and restricted one-dimensional
central extensions of a restricted Lie algebra $\g$ with $c^{[p]}=0$
are parameterized by the restricted cohomology group $H^2_*(\g)$
[\cite{H}, Theorem 3.3]. If $(\phi,\omega) \in C^2_{*}(\g)$ is a
restricted 2-cocycle, then the corresponding restricted
one-dimensional central extension $\mathfrak{G}=\g\oplus\F c$ has Lie bracket and
$[p]$-operator defined by
\begin{align}\label{genonedimext}
  \begin{split}
  [g,h]_{\mathfrak{G}}&=[g,h]_{\g} + \phi(g\wedge h)c\\
  [g,c]_{\mathfrak{G}}&=0\\
  g^{[p]_{\mathfrak{G}}}&=g^{[p]_{\g}} + \omega(g)c\\
  c^{[p]_{\mathfrak{G}}}&=0
  \end{split}
\end{align}
where $[\cdot,\cdot]_{\g}$ and $\cdot^{{[p]}_{\g}}$ denote the Lie
bracket and $[p]$-operator in $\g$, respectively (\cite{EFu},
equations (26) and (27)). 

With the equations (\ref{genonedimext}) together with
Theorem~\ref{maintheorem} we can explicitly describe the restricted
one-dimensional central extensions of $\h_m^\lambda$. 
Let $g=\sum a_ie_i$ and $h=\sum b_ie_i$ denote two arbitrary elements of
$\h_m^\lambda$.

If $1\le i\le 2m+1$ and $\mathfrak{H}_i=\h_m^\lambda\oplus \F c$ denotes
the one-dimensional restricted central extension of $\h_m^\lambda$
determined by the cohomology class of the restricted cocycle
$(0,\overline e^i)$, then (\ref{genonedimext}) gives the (non-zero) bracket and
$[p]$-operator in $\mathfrak{H}_i$:
\begin{align*}
  \begin{split}
    [g,h]_{\mathfrak{H}_i} & =[g,h]_{\h_m^\lambda};\\
    g^{[p]_{\mathfrak{H}_i}} & = g^{[p]_{\h_m^\lambda}}+ a_i^p c.
  \end{split}
\end{align*}
The central extensions $\mathfrak{H}_i$ form a basis for the $(2m+1)$-dimensional
space of restricted one-dimensional central extensions that split as
ordinary Lie algebra extensions (c.f. \cite{EFi2}).

\paragraph{The case $p>2$.}

If $p>2$, the $(p-1)$-fold bracket in (\ref{starprop}) is a multiple
of $e_{2m+1}$. If $1\le s< t\le 2m$, the map $e^{s,t}$ vanishes on
$e_{2m+1}\wedge \h_m^\lambda$ so that (\ref{starprop}) implies that
$\widetilde{e^{s,t}}$ is $p$-semilinear, and hence
$\widetilde{e^{s,t}}=0$. It follows that for any basis element in
Theorem~\ref{maintheorem} of the form
$(e^{s,t},\widetilde{e^{s,t}})=(e^{s,t},0)$, the corresponding
restricted one-dimensional central extension $\mathfrak{H}_{s,t}$ has
(non-zero) bracket and $[p]$-operator
\begin{align*}
  \begin{split}
    [g,h]_{\mathfrak{H}_{s,t}} & =[g,h]_{\h_m^\lambda}+(a_sb_t-a_tb_s)c;\\
    g^{[p]_{\mathfrak{H}_{s,t}}} & = g^{[p]_{\h_m^\lambda}}.
  \end{split}
\end{align*}
The central extensions $\mathfrak{H}_{s,t}$ form a basis for the
$(2m^2-m-1)$-dimensional space of restricted one-dimensional central
extensions that do not split as ordinary Lie algebra extensions.

\paragraph{The case $p=2$.}
If $p=2$, then (\ref{starprop}) applied to
$(e^{s,t},\widetilde{e^{s,t}})$ reduces to
\begin{align*}
  \widetilde{e^{s,t}}
  (g+h)&=\widetilde{e^{s,t}}(g)+\widetilde{e^{s,t}}
         (h)+e^{s,t}(g\wedge h)\\
       & = \widetilde{e^{s,t}}(g)+\widetilde{e^{s,t}}
         (h) + (a_sb_t-a_tb_s).
\end{align*}
Using this and $\widetilde{e^{s,t}}(e_i)=0$ for all $i$, we have
$\widetilde{e^{s,t}}(g)=a_sa_t$.  Therefore, for any basis element in
Theorem~\ref{maintheorem} of the form
$(e^{s,t},\widetilde{e^{s,t}})$, the corresponding restricted one-dimensional
central extension $\mathfrak{H}_{s,t}$ has (non-zero) bracket and
$[p]$-operator
\begin{align*}
  \begin{split}
    [g,h]_{\mathfrak{H}_{s,t}} & =[g,h]_{\h_m^\lambda}+(a_sb_t-a_tb_s)c;\\
    g^{[p]_{\mathfrak{H}_{s,t}}} & = g^{[p]_{\h_m^\lambda}}+a_sa_tc.
  \end{split}
\end{align*}
Once again, the central extensions $\mathfrak{H}_{s,t}$ form a basis
for the $(2m^2-m-1)$-dimensional space of the restricted
one-dimensional central extensions that do not split as ordinary Lie
algebra extensions.

\paragraph {Acknowledgement.}
The authors would like to express their sincere gratitude to the
anonymous referee for the insightful comments and suggestions, which
have significantly enhanced the clarity of this paper.
\\

\end{document}